\newcommand{\mf}{\mathfrak}
\newcommand{\mc}{\mathcal}
\newcommand{\mb}{\mathbf}
\newcommand{\R}{\mathbf R}
\newcommand{\C}{\mathbf C}
\newcommand{\Q}{\mathbf Q}
\newcommand{\Z}{\mathbf Z}
\newcommand{\F}{\mathbf F}
\newcommand{\N}{\mathbf N}
\newcommand{\K}{\mathbf K}
\newcommand{\Fix}{\textnormal{Fix}}
\newcommand{\Frob}{\textnormal{Frob}}
\numberwithin{equation}{section}
\theoremstyle{plain}
\newtheorem{theorem}{Theorem}
\newtheorem{lemma}[theorem]{Lemma}
\newtheorem{corollary}[theorem]{Corollary}
\theoremstyle{definition}
\begin{document}

\title[Exceptional Polynomial Mappings]{On the Arithmetic Exceptionality of 
Polynomial Mappings}

\author{\"{O}mer K\"{u}\c{c}\"{u}ksakall{\i}}
\address{Middle East Technical University, Mathematics Department, 06800 Ankara,
Turkey.}
\email{komer@metu.edu.tr}

\date{\today}

\begin{abstract}
In this note we prove that certain polynomial mappings $P_\mf{g}^k(\mb{x}) \in 
\Z[\mb{x}]$ in $n$-variables obtained from simple complex Lie algebras $\mf{g}$ 
of arbitrary rank $n \ge 1$,  are exceptional. 
\end{abstract}

\subjclass[2010]{11T06}

\keywords{keywords}

\maketitle


We recall that a polynomial mapping $P \in {\Z}[\mb{x}]$ in $n$ variables is 
said to be exceptional if the reduced map $\bar{P} : {\F}_p^n \to {\F}_p^n$ is 
a permutation for infinitely many primes $p$. Lidl and Wells proved the 
existence of nontrivial exceptional polynomial mappings of arbitrary rank 
\cite{lidlwells}. They achieved this by means of elementary methods, namely 
using the theory of symmetric functions, however their construction can be 
related to the simple complex Lie algebras $A_n$ \cite{hoffwith}.

In this paper, we prove that certain polynomial mappings $P_\mf{g}^k(\mb{x}) 
\in \Z[\mb{x}]$ in $n$-variables obtained from arbitrary simple complex Lie 
algebras $\mf{g}$ of rank $n \ge 1$ are exceptional. The structure 
of the proof follows closely the pattern of the $n = 2$ case 
\cite{kucuksakalli-bivariate}.

Let $\mf{g}$ be a simple complex Lie algebra of rank $n$ and $\mf{h}$ its Cartan 
subalgebra, $\mf{h}^*$ its dual space, $\mc{L}$ a lattice of weights in 
$\mf{h}^*$ generated by the fundamental weights $\omega_1, \ldots, \omega_n$, 
and $L$ the dual lattice in $\mf{h}$. Applying the exponential form of 
Chevalley's Theorem (Thm. 1, p.188, \cite{Bourbaki}) one proves that the 
quotient of $\mf{h}/L$ under the action of the Weyl group $W$ (induced from the 
action of $W$ on $\mc{L}$) is the $n$-dimensional complex affine space and the 
quotient 
map is given by 
$\varPhi_\mf{g}:\mf{h}/L \rightarrow \C^n$, $\varPhi_\mf{g} = 
(\varphi_1,\ldots, \varphi_n)$
\[\varphi_k(\mb{x})=\sum_{w\in W} e^{2\pi i w (\omega_k)(\mb{x})}.\] 
This  construction leads to the following result first given by Veselov, and 
somewhat later by Hofmann and Withers independently.

\begin{theorem}[\cite{veselov},\cite{hoffwith}]\label{veselov}
With each simple complex Lie algebra $\mf{g}$ of rank $n$, there is associated 
an infinite sequence of integrable polynomial mappings $P_\mf{g}^k, ~k \in {\N}$ 
 determined from the conditions
\[ \varPhi_\mf{g}(k\mathbf{x})=P_\mf{g}^k(\varPhi_\mf{g}(\mathbf{x})). \]
All coefficients of the polynomials defining $P_\mf{g}^k$ are integers.
\end{theorem}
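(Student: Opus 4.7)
The plan is to reduce Theorem~\ref{veselov} to two ingredients: the exponential Chevalley theorem already cited, which realizes $\C^n$ as $(\mf{h}/L)/W$ via $\varPhi_\mf{g}$, and an integrality refinement asserting that the Weyl-invariant subring $\Z[\mc{L}]^W$ of the group algebra is the polynomial $\Z$-algebra on the orbit sums $\varphi_1,\ldots,\varphi_n$ attached to the fundamental weights. The former gives the existence of $P_\mf{g}^k$ with complex coefficients, and the latter promotes those coefficients to integers.

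For the existence, I would fix $k\in\N$ and check that the function $\mb{x}\mapsto \varphi_j(k\mb{x})$ is both Weyl-invariant and $L$-periodic. Weyl-invariance is immediate: the substitution $\mb{x}\mapsto w_0\mb{x}$ permutes the summands $e^{2\pi i w(\omega_j)(k\mb{x})}$ via $w\mapsto ww_0$. $L$-periodicity follows because shifting $\mb{x}$ by $\ell\in L$ multiplies the $w$-th summand by $e^{2\pi i w(\omega_j)(k\ell)}$, and since $w(\omega_j)\in\mc{L}$ and $k\ell\in L$, the pairing $w(\omega_j)(k\ell)$ is an integer. Hence $\varphi_j(k\,\cdot\,)$ descends to a holomorphic function on $(\mf{h}/L)/W\simeq\C^n$, which Chevalley identifies with a polynomial $P_j^\mf{g}\in\C[y_1,\ldots,y_n]$ satisfying $\varphi_j(k\mb{x})=P_j^\mf{g}(\varphi_1(\mb{x}),\ldots,\varphi_n(\mb{x}))$.

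For integrality I would transfer the computation to the group algebra $R=\Z[\mc{L}]$, on whose $\Z$-basis $\{e^\mu\}_{\mu\in\mc{L}}$ the Weyl group acts by $w\cdot e^\mu=e^{w\mu}$. The invariant subring $R^W$ has $\Z$-basis the orbit sums $m_\lambda=\sum_{\mu\in W\cdot\lambda}e^\mu$, indexed by dominant $\lambda$, and under the identification $e^\mu\leftrightarrow e^{2\pi i\mu(\,\cdot\,)}$ one has $\varphi_j=m_{\omega_j}$ and $\varphi_j(k\mb{x})=m_{k\omega_j}$. A triangularity argument in the dominance order gives, for each dominant $\lambda=\sum a_i\omega_i$, an identity $m_{\omega_1}^{a_1}\cdots m_{\omega_n}^{a_n}=m_\lambda+\sum_{\mu\prec\lambda}c_{\lambda,\mu}m_\mu$ with $c_{\lambda,\mu}\in\N$. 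Downward induction on $\lambda$ then expresses each $m_\lambda$, and in particular $m_{k\omega_j}$, as an element of $\Z[\varphi_1,\ldots,\varphi_n]$, yielding the integrality of $P_j^\mf{g}$.

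The main obstacle is verifying this triangularity, specifically that $m_\lambda$ appears in $m_{\omega_1}^{a_1}\cdots m_{\omega_n}^{a_n}$ with coefficient exactly one. The key observation is that when the product is expanded, any contributing tuple $(\mu_{i,j})$ with $\mu_{i,j}\in W\cdot\omega_i$ satisfies $\sum_{i,j}\mu_{i,j}\preceq\sum_i a_i\omega_i$ because $\mu_{i,j}\preceq\omega_i$ individually, and equality can hold only if $\mu_{i,j}=\omega_i$ for every $(i,j)$. This forces the coefficient of $e^\lambda$, and hence of $m_\lambda$, to equal one, and the triangular system over $\Z$ becomes solvable; the remainder of the argument is bookkeeping.
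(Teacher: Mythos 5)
The paper states Theorem~\ref{veselov} as a result imported from \cite{veselov} and \cite{hoffwith} and offers no proof of its own, so there is no in-paper argument to compare against; what you give is the standard proof, namely the integral form of the exponential Chevalley theorem ($\Z[\mc{L}]^W$ is the polynomial $\Z$-algebra on the fundamental orbit sums) together with triangularity in the dominance order, and it is essentially correct. The dominance argument is sound: for $\mu\in W\omega_i$ one has $\omega_i-\mu\in\sum_\ell\Z_{\ge 0}\alpha_\ell$, linear independence of the simple roots forces the coefficient of $e^\lambda$ in $m_{\omega_1}^{a_1}\cdots m_{\omega_n}^{a_n}$ to be exactly $1$, and the downward induction runs over the finite set of dominant weights $\preceq k\omega_j$. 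Two remarks. First, your opening paragraph is subsumed by the group-algebra argument (once $m_{k\omega_j}\in\Z[\mc{L}]^W=\Z[m_{\omega_1},\ldots,m_{\omega_n}]$ you are done), and as written it is slightly imprecise: a holomorphic function on the quotient $\simeq\C^n$ is not automatically a polynomial, so one should invoke the algebraic statement $\C[\mc{L}]^W=\C[\varphi_1,\ldots,\varphi_n]$ rather than holomorphy. Second, and more substantively, the identification $\varphi_j=m_{\omega_j}$ on which your integrality step rests requires reading the paper's sum $\sum_{w\in W}e^{2\pi i\,w(\omega_j)(\mb{x})}$ as a sum over the orbit $W\omega_j$, i.e.\ over coset representatives modulo the stabilizer of $\omega_j$. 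Taken literally the displayed formula equals $|\mathrm{Stab}_W(\omega_j)|\cdot m_{\omega_j}$, and with that normalization integrality genuinely fails: already for $A_2$ one finds $\varphi_1=2(t_1+t_2+t_3)$, $\varphi_2=2(t_1t_2+t_1t_3+t_2t_3)$ with $t_1t_2t_3=1$, whence $\varphi_1(2\mb{x})=\tfrac12\varphi_1^2-2\varphi_2$. You are implicitly (and correctly) adopting the orbit-sum normalization of \cite{hoffwith}, under which $\varphi_j(k\mb{x})=m_{k\omega_j}$ as you claim; this convention should be stated explicitly, but granting it, the proof is complete.
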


We will prove that for any $\mf{g}$, there exists $k$ such that the mapping 
$P_\mf{g}^k$ is exceptional (Corollary~\ref{exist}). \\

We first fix our notation.

Throughout the paper $q$ denotes a power of a prime $p$. 

$\Frob_q$ is the Frobenius map  $(x_1,\ldots,x_n) \mapsto (x_1^q, \ldots, 
x_n^q)$. 

For any polynomial $P\in \Z[\mb{x}]$ in the $n$ variables $x_1,\ldots, x_n$,
$\bar{P}:\F_q^n \rightarrow \F_q^n$ is the map induced by reduction mod $p$. 

$\K = \Q( \Fix(P_\mf{g}^q) )$ is the number field which is obtained by adjoining 
the coordinates of the fixed points of $P_\mf{g}^q$ to the rational numbers. 

$\mf{p}$ is a prime ideal of $\K$ lying over $p$. 

For $\mf{g}$ a simple complex Lie algebra of rank $n$ with roots $\lambda_i, ~ 
i= 1,\ldots,n$ we identify $\mf{h} = \oplus \C \lambda_i$ (resp. the lattice $L 
= \oplus \Z\lambda_i$) with $\C^n$ (resp. $\Z^n$).

$I_n$ is the identity matrix of dimensions $n\times n$. For $w \in W$, $T_w$ is 
the $n\times n$ matrix representing the endomorphism $T_w : \mc{L} \to \mc{L}$ 
defined by  $T_w(\omega_i) = 
w(\omega_i)$ for each 
$i=1, \ldots, n$.\\

The following commutative diagram summarizes the set-up we will work in. 
\begin{center}
\begin{tikzpicture}
  \matrix (m) [matrix of math nodes,row sep=1.5em,column sep=10em,minimum 
width=2em] {
     \mf{h}/L & \C^n \\
     \mf{h}/L & \C^n \\};
  \path[-stealth]
    (m-1-1) edge node [left] {[k]} (m-2-1)
     edge node [above] {$\varPhi_\mf{g}$} (m-1-2)
    (m-2-1) edge node [above] {$\varPhi_\mf{g}$} (m-2-2)
    (m-1-2) edge node [right] {$P_\mf{g}^k$} (m-2-2);
\end{tikzpicture}
\end{center}

With this notation, our main result is the following theorem.

\begin{theorem}\label{main}
Let $\mf{g}$ be a complex Lie algebra of rank $n$ and let $W$ be its Weyl 
group. The polynomial mapping $\bar{P}_\mf{g}^k:\F_q^n \rightarrow \F_q^n$ is a 
permutation if and only if $qI_n-T_w$ is invertible modulo $k$ for each $w\in 
W$.  
\end{theorem}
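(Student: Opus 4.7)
The plan is to realize $\C^n$ as the algebraic quotient $T/W$, where $T = \mathrm{Hom}(\mc{L},\mathbb{G}_m)$ is the torus with character lattice $\mc{L}$ and $W$ acts on $T$ via its linear action on $\mc{L}$. Under this identification $\varPhi_\mf{g}$ becomes the quotient map $\pi\colon T\to T/W$, and $P_\mf{g}^k$ is the morphism induced by the $k$-th power endomorphism $[k]\colon T\to T$; the diagram in the excerpt is then the tautology $\pi\circ[k]=P_\mf{g}^k\circ\pi$ defined over $\Z$.

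I would then pass to $\bar{\F}_p$ and use the key observation that on $T(\bar{\F}_p)$ the $q$-th power map $[q]$ agrees with the geometric Frobenius $\Frob_q$. Pushing this through $\pi$ gives $\bar{P}_\mf{g}^q=\Frob_q$ as self-maps of $\bar{\F}_p^n$, so that $\F_q^n=\Fix(\bar{P}_\mf{g}^q)$. Pulling back along $\pi$ identifies $\F_q^n$ with $S/W$, where
\[ S:=\{t\in T(\bar{\F}_p):t^q\in Wt\}=\bigcup_{w\in W}S_w,\qquad S_w:=\ker\bigl([q]-[w]\bigr), \]
and $\bar{P}_\mf{g}^k$ corresponds to $[k]\colon S/W\to S/W$. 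Each $S_w$ is a finite subgroup of $T(\bar{\F}_p)$, cut out by the endomorphism of $T$ whose action on $\mc{L}$ is the integer matrix $qI_n-T_w$, and $[k]$ preserves it.

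Now the matrix condition enters cleanly. Since $[k]$ restricts to a group endomorphism of the finite group $S_w$, it is injective (hence bijective) on $S_w$ iff $T[k]\cap S_w=\{1\}$. Using a primitive $k$-th root of unity to identify $T[k]\cong(\Z/k\Z)^n$ (assuming $p\nmid k$; the general case is similar), this intersection is exactly the kernel of $(qI_n-T_w)\bmod k$ acting on $(\Z/k\Z)^n$, which is trivial precisely when $qI_n-T_w$ is invertible modulo $k$. Hence the arithmetic condition in the theorem is equivalent to $[k]$ being a bijection on every stratum $S_w$.

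It remains to match bijectivity of $[k]$ on the individual strata with bijectivity of the induced map on $S/W$. The $(\Rightarrow)$ implication is the easier one: specializing to the $W$-fixed point $\pi(1)\in\F_q^n$, any $\zeta\in T[k]\cap S_w$ provides a preimage of $\pi(1)$ under $\bar{P}_\mf{g}^k$ that must equal $\pi(1)$ itself, forcing $\zeta=1$ and hence the matrix condition for every $w$. For $(\Leftarrow)$, given $t_1,t_2\in S$ with $t_1^k=t_2^k$ and $t_i\in S_{w_i}$, one sets $\zeta:=t_1/t_2\in T[k]$ and derives from the Frobenius relations the identity
\[ \zeta^q/w_1(\zeta)=w_1(t_2)/w_2(t_2), \]
whose unique $T[k]$-solution must coincide with one of the ``trivial'' Weyl-twist solutions $v(t_2)/t_2$, placing $t_1\in Wt_2$. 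The delicate bookkeeping across intersections $S_w\cap S_{w'}$, together with the care required when $t_2$ has nontrivial stabilizer in $W$, is where I expect the main technical effort to concentrate.
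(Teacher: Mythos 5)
Your framework---realizing $\F_q^n$ as $S/W$ with $S=\bigcup_w S_w$, $S_w=\{t\in T(\bar{\F}_p):t^q=w(t)\}$, and translating the matrix condition into triviality of $T[k]\cap S_w$---is sound and is essentially the characteristic-$p$ mirror of what the paper does: the paper instead works with $\Fix(P_\mf{g}^q)$ in characteristic $0$, parametrized by the rows of $(qI_n-T_w)^{-1}$ modulo $\Z^n$, and transports the problem to $\F_q^n$ by reduction modulo a prime $\mf{p}$ of $\K$ (its Lemmas on $|\Fix(P_\mf{g}^q)|=q^n$, injectivity of reduction, and $\bar{P}_\mf{g}^q=\Frob_q$ are exactly the counting input you would need to justify that $\pi(S)=\F_q^n$ with fibers the $W$-orbits; in your version this requires the $\Z$-form of Chevalley's theorem so that $T/W\cong\mathbb{A}^n$ base-changes to $\bar{\F}_p$). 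Your $(\Rightarrow)$ direction via the point $\pi(1)$ is correct and is in fact cleaner than the paper's rather terse converse.

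The genuine gap is in $(\Leftarrow)$, and you have correctly located it yourself but not closed it. From $t_1^k=t_2^k$ you derive $\zeta^q/w_1(\zeta)=w_1(t_2)/w_2(t_2)$ with $\zeta\in T[k]$, and invertibility of $qI_n-T_{w_1}$ mod $k$ does give \emph{at most one} solution $\zeta$ in $T[k]$; but your next step---that this solution ``must coincide with one of the trivial Weyl-twist solutions $v(t_2)/t_2$''---is unsupported: the elements $v(t_2)/t_2$ need not lie in $T[k]$ at all, and nothing shows the right-hand side is even in the image of $\zeta\mapsto\zeta^q/w_1(\zeta)$ restricted to $T[k]$. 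So injectivity of the induced map on $S/W$ across different strata is not established. The repair is available inside your own setup and is precisely the paper's trick: triviality of $S_w[k]$ for every $w$ means $\gcd(k,N)=1$ where $N$ is the exponent of the finite set $S$ (equivalently, all denominators of the vectors $\mb{x}_i^w$ are coprime to $k$); choosing $\ell$ with $k\ell\equiv 1\pmod{N}$ makes $[\ell]$ a two-sided inverse of $[k]$ on $S$, hence on $S/W$, and all bookkeeping over the intersections $S_w\cap S_{w'}$ and over stabilizers disappears. I recommend replacing your $\zeta$-equation argument by this inverse-map argument.
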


The proof of the theorem will be given at the end of the paper. We note the 
following corollaries.

\begin{corollary}\label{order}
Let $\mf{g}$ be a complex Lie algebra of rank $n$ and let $W$ be its Weyl 
group. The polynomial mapping $\bar{P}_\mf{g}^k:\F_q^n \rightarrow \F_q^n$ is a 
permutation if $\gcd(k,q^s-1)=1$ for each $s\in \{ \textnormal{order of }w 
\mathrel{|}w\in W \}.$
\end{corollary}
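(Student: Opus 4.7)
The plan is to deduce Corollary~\ref{order} directly from Theorem~\ref{main} by showing that the hypothesis on orders controls the determinants $\det(qI_n - T_w)$ well enough to guarantee invertibility modulo $k$.

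First I would record two basic structural facts about $T_w$. Since $W$ acts on the weight lattice $\mc{L}$, the endomorphism $T_w$ is represented by an integer matrix, so the characteristic polynomial $\chi_w(X) := \det(XI_n - T_w)$ lies in $\Z[X]$ and is monic of degree $n$. Secondly, if $w$ has order $s$ in $W$, then $T_w^s = I_n$, so the minimal polynomial of $T_w$ divides the separable polynomial $X^s - 1$. Consequently $T_w$ is diagonalizable over $\C$ and every eigenvalue of $T_w$ is an $s$-th root of unity.

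The next step is to translate this into an integer divisibility. From diagonalizability and the bound on eigenvalue orders, each linear factor of $\chi_w(X)$ (over $\overline{\Q}$) divides $X^s - 1$, and each eigenvalue appears with multiplicity at most $n$. Therefore $\chi_w(X)$ divides $(X^s - 1)^n$ in $\Q[X]$. Since $\chi_w$ is monic and both polynomials lie in $\Z[X]$, Gauss's lemma promotes this to a divisibility in $\Z[X]$. Specializing $X = q$ yields
\[
    \det(qI_n - T_w) = \chi_w(q) \mathrel{\big|} (q^s - 1)^n \quad \text{in } \Z.
\]

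Now suppose $\gcd(k, q^s - 1) = 1$ for every order $s$ of an element of $W$. Then for each $w \in W$ of order $s$ we have $\gcd(k, \chi_w(q)) = 1$, so $\det(qI_n - T_w)$ is a unit in $\Z/k\Z$ and $qI_n - T_w$ is invertible modulo $k$. Theorem~\ref{main} then gives that $\bar{P}_\mf{g}^k$ is a permutation of $\F_q^n$. The only non-routine step is the polynomial divisibility $\chi_w(X) \mathrel{|} (X^s - 1)^n$ over $\Z[X]$, and even this is a short application of semisimplicity of finite-order operators together with Gauss's lemma; no further input from the Lie-theoretic setting is needed.
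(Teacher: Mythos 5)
Your proof is correct, but it takes a genuinely different route from the paper's. The paper avoids determinants and eigenvalues entirely: it simply writes the matrix factorization
\[
(q^s-1)I_n \;=\; (qI_n)^s - T_w^s \;=\; (qI_n - T_w)\bigl(q^{s-1}I_n + \cdots + T_w^{s-1}\bigr),
\]
observes that the scalar matrix $(q^s-1)I_n$ is invertible modulo $k$ by the hypothesis $\gcd(k,q^s-1)=1$, and concludes that the commuting factor $qI_n - T_w$ is invertible modulo $k$; Theorem~\ref{main} then finishes as in your argument. You instead pass to the determinant criterion and prove the polynomial divisibility $\chi_w(X) \mid (X^s-1)^n$ in $\Z[X]$ via the fact that the minimal polynomial of $T_w$ divides $X^s-1$, followed by Gauss's lemma. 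Both arguments are sound; yours actually needs slightly less than you invoke (diagonalizability is not required --- it suffices that every root of $\chi_w$ is an $s$-th root of unity and that $\deg\chi_w = n$ bounds the multiplicities). The paper's factorization is shorter and purely formal, and it yields your divisibility $\det(qI_n-T_w)\mid (q^s-1)^n$ for free upon taking determinants; your version makes the arithmetic content (the behaviour of $\det(qI_n-T_w)$) more explicit, at the cost of a detour through eigenvalue theory.
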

\begin{proof}
The matrix $T_w$ satisfies $T_w^s=I_n$ where $s$ is the order of $w\in W$. The 
matrix $(q^s-1)I_n$ is invertible modulo $k$ by the hypothesis. Note that
\[ (q^s-1)I_n = (qI_n)^s-T_w^s = (qI_n-T_w)(q^{s-1}I_n+\ldots+T_w^{s-1}). \]
Thus $qI_n-T_w$ is invertible modulo $k$. 
\end{proof}

In Corollary~\ref{order} the converse implication is not true. Lidl and Wells prove in 
\cite{lidlwells} that $P_{A_4}^k$ is a permutation of 
$\F_q^4$ if and only if $\gcd(k,q^s-1)=1$ for $s=1,2,3,4,5$. On the other hand, 
the Weyl group $A_4$ is the symmetric group $S_5$ which contains 
an element of order $6$. \\

An important application of Theorem~\ref{main} is the following corollary.

\begin{corollary}\label{exist}
There exists $k \in \N$ such that $P_\mf{g}^k$ is exceptional.
\end{corollary}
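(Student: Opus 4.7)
The plan is to derive the corollary from Corollary~\ref{order} together with Dirichlet's theorem on primes in arithmetic progressions. Let $S \subset \N$ denote the (finite) set of orders of elements of the Weyl group $W$. By Corollary~\ref{order}, to produce an exceptional $P_\mf{g}^k$ it suffices to find an integer $k$ for which there are infinitely many rational primes $p$ with $\gcd(k, p^s - 1) = 1$ for every $s \in S$.

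First I would take $k$ to be any rational prime satisfying $k - 1 > \max S$; infinitely many such primes exist. I would then fix a primitive root $a$ modulo $k$, so that $a$ has exact order $k-1$ in $(\Z/k\Z)^{\times}$. Dirichlet's theorem supplies infinitely many primes $p$ with $p \equiv a \pmod{k}$, and for each such $p$ the multiplicative order of $p$ modulo $k$ equals $k-1$. Because $s < k-1$ for every $s \in S$, the integer $k-1$ cannot divide $s$, so $p^s \not\equiv 1 \pmod{k}$, i.e., $k \nmid p^s - 1$. As $k$ is prime this forces $\gcd(k, p^s - 1) = 1$, and Corollary~\ref{order} now yields that $\bar{P}_\mf{g}^k$ permutes $\F_p^n$ for each of these infinitely many primes $p$, so $P_\mf{g}^k$ is exceptional.

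Given Corollary~\ref{order} and Dirichlet's theorem the argument is essentially mechanical. The only conceptual point is to decouple the finitely many Weyl-group orders from the residue of $p$ modulo $k$, which is arranged by demanding $k-1 > \max S$. I therefore do not anticipate a genuine obstacle at this stage; the substantive work is carried by Theorem~\ref{main} and its corollary.
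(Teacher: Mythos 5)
Your argument is correct and follows exactly the route the paper intends: the paper's own proof is a one-line appeal to Dirichlet's theorem plus Corollary~\ref{order}, and you have simply made explicit a valid choice of $k$ (a prime with $k-1$ exceeding every element order in $W$) and of arithmetic progression (primes congruent to a primitive root mod $k$). No gap; this is the same proof with the details filled in.
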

\begin{proof}
There exist infinitely many primes in any arithmetic progression. It is now easy 
to see by Corollary~\ref{order} that for each $\mf{g}$, there exists an integer 
$k$ so that $P_\mf{g}^k$ is exceptional. 
\end{proof}

In the proof of the theorem we will need the following lemmata.

\begin{lemma}\label{fix}
For any integer $k \geq 1$, we have
$|\Fix(P_\mf{g}^k)|=k^n$.
\end{lemma}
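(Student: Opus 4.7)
My plan is to lift the fixed-point condition through the commutative diagram, count $W$-orbits via a Burnside-type argument, and reduce to a character-theoretic identity.

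By the diagram, $\mb{y} = \varPhi_\mf{g}(\mb{x})$ is fixed by $P_\mf{g}^k$ iff $\varPhi_\mf{g}(k\mb{x}) = \varPhi_\mf{g}(\mb{x})$, which means $k\mb{x}$ and $\mb{x}$ lie in the same $W$-orbit of $\mf{h}/L$, i.e.\ $(kI_n - T_w)\mb{x} \in L$ for some $w \in W$. For each $w$, set $A_w := \{\mb{x} \in \mf{h}/L : (kI_n - T_w)\mb{x} \in L\}$. Since $T_w$ preserves $L$ with integer matrix and its eigenvalues are roots of unity (so $kI_n - T_w$ is invertible over $\Q$ for $k \geq 2$), a standard index computation identifies $A_w$ with the finite group $(kI_n - T_w)^{-1}(L)/L$ and yields $|A_w| = \det(kI_n - T_w)$, a positive integer.

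Next, I would double-count the incidence set $S = \{(\mb{x}, w) \in (\mf{h}/L) \times W : k\mb{x} \equiv w(\mb{x}) \pmod{L}\}$. Summing over $w$ gives $|S| = \sum_w \det(kI_n - T_w)$. For fixed $\mb{x} \in F := \bigcup_w A_w$, the set of witnessing $w$'s is a coset of $\textnormal{Stab}_W(\mb{x})$ and hence has size $|\textnormal{Stab}_W(\mb{x})|$; since $F$ is $W$-invariant (if $k\mb{x} = w(\mb{x})$ then $kv(\mb{x}) = (vwv^{-1})(v(\mb{x}))$) and its $W$-orbits biject with $\Fix(P_\mf{g}^k)$, the orbit-stabilizer theorem gives $|S| = |W| \cdot |\Fix(P_\mf{g}^k)|$. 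Hence the lemma reduces to
\[ \sum_{w \in W} \det(kI_n - T_w) \;=\; |W| \cdot k^n. \]

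Expanding $\det(kI_n - T_w) = \sum_{j=0}^n (-1)^j k^{n-j} \chi_{\Lambda^j V}(w)$ with $V := \mf{h}$ and averaging via the standard identity $\frac{1}{|W|}\sum_w \chi_\rho(w) = \dim \rho^W$, the displayed formula is equivalent to $\dim(\Lambda^j V)^W = \delta_{j,0}$. The case $j = n$ is immediate ($W$ acts on the line $\Lambda^n V$ by the sign character, nontrivial because $W$ contains reflections), and the case $j = 0$ is trivial. The main obstacle is the range $0 < j < n$: this is the classical fact, a corollary of Chevalley--Shephard--Todd, that the $W$-invariant algebraic differential forms on $\mf{h}$ form a free module over $\C[\mf{h}]^W = \C[f_1, \ldots, f_n]$ generated by $df_1, \ldots, df_n$, where each $df_i$ has positive polynomial degree $d_i - 1 \geq 1$, so no polynomial-degree-zero invariant exterior form of positive exterior-degree exists.
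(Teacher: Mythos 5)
Your argument is correct (for $k\ge 2$; for $k=1$ the map $P_\mf{g}^1$ is the identity and the statement is degenerate in both your reading and the paper's), but it is a genuinely different proof from the one in the paper. The paper argues geometrically: it subdivides the compact fundamental domain $D/W$ into $k^n$ simplexes, each of which carries exactly one fixed point of the contracting inverse branch of $[k]$, and then spends most of its effort on a delicate edge-and-corner analysis to rule out double counting of fixed points lying on the shared boundaries of the simplexes. You instead count algebraically: the solution set of $k\mb{x}\equiv w\mb{x}\pmod{L}$ has exactly $\det(kI_n-T_w)$ elements, a Burnside/orbit--stabilizer double count converts $\sum_{w}\det(kI_n-T_w)$ into $|W|\cdot|\Fix(P_\mf{g}^k)|$, and the resulting identity $\frac{1}{|W|}\sum_w\det(kI_n-T_w)=k^n$ is exactly the statement $\dim(\Lambda^j V)^W=\delta_{j,0}$, which follows from Solomon's theorem on invariant differential forms (equivalently, from the fact that all degrees $d_i$ of the basic invariants satisfy $d_i\ge 2$ — this is where semisimplicity of $\mf{g}$, i.e.\ $V^W=0$, enters, and you should say so explicitly). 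What your route buys is a clean treatment of precisely the points where the paper's argument is most fragile: fixed points with nontrivial stabilizer (the boundary/corner points) are handled automatically by the orbit--stabilizer bookkeeping rather than by ad hoc geometric reasoning. What it costs is the appeal to a classical but nontrivial theorem on reflection groups, whereas the paper's argument is elementary in principle. A side benefit of your method is that it produces the explicit description of $\Fix(P_\mf{g}^q)$ via the rows of $(qI_n-T_w)^{-1}$ that the paper needs anyway in the proof of Theorem~\ref{main}.
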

\begin{proof} 
In order to prove this lemma, we generalize an idea of Uchimura for the case 
$\mf{g}=A_2$ \cite{uchimura}. For an illustration of this idea, see 
\cite{kucuksakalli-bivariate}.

The set of points in $\C^n$ with bounded orbit under $P_\mf{g}^k$ are of the 
form $\varPhi_\mf{g}({\mb x})$ where $\mb{x}$ has real components. The Weyl 
group $W$ acts on the quotient set $D=\R^n/\Z^n$. The elements in 
$\varPhi_\mf{g}(D)$ can be represented by several different expressions 
$\varPhi({\mb x})$, with ${\mb x}=(x_1,\ldots,x_n)$ and $0\leq x_i <1$, thanks 
to the action of the Weyl group $W$. Consider the compact set $D/W$. The 
multiplication map $[k]:D/W \rightarrow D/W$ induces a $k^n$ to $1$ map.

Divide $D/W$ into $k^n$ simplexes $T_1,\ldots,T_{k^n}$ such that each one of 
them is mapped onto $D/W$ under the multiplication by $k$. Consider the 
inverse map from $D/W$ to $T_i$ which is division by $k$ together with a 
suitable linear translation. Being a continuous map there exists at least one 
fixed point of this map. Moreover there is at most one such point in each $T_i$ 
because of the linearity. We must show that these fixed points are distinct. A 
repetition can occur only at the boundaries of the simplexes $T_i$. However the 
multiplication by $k$ maps such a boundary to the boundary of $D/W$. It follows 
that a possible repetition may only be at a corner of a simplex which has a 
part on the boundary of $D/W$. However such a corner is mapped to a corner of 
$D/W$.

It remains to show that a common corner $P$ of two different simplexes $T_i$ 
and $T_j$, which is at the same time a corner of $D/W$, is not fixed 
under $[k]$. Assume otherwise. The compact set $D/W$, and therefore each 
simplex, has $n$ edges connecting to a corner. Thus, we see that 
there must be a common edge of $T_{i}$ and $T_j$ with endpoint $P$, and which 
does not lie on the boundary of $D/W$. Note that the outer $n$ edges on the 
boundary of $D/W$ should be permuted among themselves under $[k]$. On the other 
hand, the extra edge shall be mapped to the boundary of $D/W$ onto one of the 
outer edges. Now, $T_i$ has $n$ edges with endpoint $P$, and at least two of 
them are mapped onto the same edge of $D/W$. This is a contradiction.
\end{proof}

\begin{lemma}\label{nf} Let $k\geq 1$ be an integer. The number field $\Q( 
\Fix(P_\mf{g}^k) )$ is contained in the compositum of the cyclotomic fields 
$\Q(\zeta_{k^s-1})$ where $s\in \{ \textnormal{order of }w \mathrel{|} w\in W 
\}.$
\end{lemma}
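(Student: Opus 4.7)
The plan is to lift fixed points of $P_\mf{g}^k$ through $\varPhi_\mf{g}$ and explicitly bound the denominators that appear.

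First I would use the commutative diagram in the set-up. Any fixed point $\mb{z}\in \Fix(P_\mf{g}^k)$ can be written as $\mb{z}=\varPhi_\mf{g}(\mb{y})$ for some $\mb{y}\in \mf{h}/L$, and the relation $\varPhi_\mf{g}(k\mb{y})=P_\mf{g}^k(\varPhi_\mf{g}(\mb{y}))=\varPhi_\mf{g}(\mb{y})$ together with the fact that $\varPhi_\mf{g}$ identifies $W$-orbits yields
\[
k\mb{y}\equiv w(\mb{y})\pmod{L}
\]
for some $w\in W$. Equivalently, $(kI_n-T_w)\mb{y}\in L=\Z^n$, so $\mb{y}=(kI_n-T_w)^{-1}\mb{m}$ for some $\mb{m}\in\Z^n$. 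The invertibility of $kI_n-T_w$ over $\Q$ is not an issue: the eigenvalues of $T_w$ are roots of unity since $T_w^s=I_n$ (where $s$ is the order of $w$), and $k\geq 2$ is never such a root, so $k$ is not an eigenvalue of $T_w$.

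Next, I would exploit the same algebraic identity used in the proof of Corollary~\ref{order}, namely
\[
(k^s-1)I_n=(kI_n-T_w)\bigl(k^{s-1}I_n+k^{s-2}T_w+\cdots+T_w^{s-1}\bigr).
\]
This gives the explicit formula
\[
\mb{y}=\frac{1}{k^s-1}\bigl(k^{s-1}I_n+k^{s-2}T_w+\cdots+T_w^{s-1}\bigr)\mb{m},
\]
and since each $T_w^j$ is an integer matrix and $\mb{m}\in\Z^n$, the coordinates of $\mb{y}$ (in the basis $\lambda_1,\ldots,\lambda_n$ of $L$) lie in $\tfrac{1}{k^s-1}\Z$.

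Finally, I would translate this back into a statement about $\varphi_j(\mb{y})$. Writing $w'(\omega_j)\in \mc{L}$ as an integer combination of the fundamental weights and using that $\omega_i$ and $\lambda_i$ are dual bases, the pairing $w'(\omega_j)(\mb{y})$ is an integer linear combination of the coordinates of $\mb{y}$, hence lies in $\tfrac{1}{k^s-1}\Z$. Consequently each exponential $e^{2\pi i w'(\omega_j)(\mb{y})}$ is a $(k^s-1)$-th root of unity, so $\varphi_j(\mb{y})\in \Q(\zeta_{k^s-1})$. Running over all fixed points $\mb{z}$ and all coordinates, one concludes that $\Q(\Fix(P_\mf{g}^k))$ is contained in the compositum of the fields $\Q(\zeta_{k^s-1})$ as $s$ ranges over the orders of the elements of $W$.

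The main obstacle I anticipate is purely bookkeeping: verifying that the pairing between $\mc{L}$ and $L$ behaves as expected under the identifications $L=\Z^n$, $\mc{L}=\Z^n$ chosen in the set-up, so that the denominator $k^s-1$ produced by $(kI_n-T_w)^{-1}$ really does control the order of the roots of unity appearing in $\varphi_j(\mb{y})$. The algebra itself is straightforward once invertibility of $kI_n-T_w$ is justified via the spectral remark above.
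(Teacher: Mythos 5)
Your proposal is correct and follows essentially the same route as the paper: lift a fixed point through $\varPhi_\mf{g}$ to get $k\mb{x}\equiv w\mb{x}\pmod{\Z^n}$, deduce that the coordinates have denominators dividing $k^s-1$ (the paper iterates the congruence to $k^s\mb{x}\equiv\mb{x}$, which is the same algebra as your factorization of $(k^s-1)I_n$), and conclude that the $\varphi_j$ values are sums of $(k^s-1)$-th roots of unity. Your write-up merely makes explicit the final translation into cyclotomic fields, which the paper leaves implicit.
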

\begin{proof}
 Let $\alpha = \varPhi_\mf{g}({\mb x})$ be an element that is fixed under 
$P_\mf{g}^k$. It follows that $k\mb{x}\equiv w\mb{x} \pmod{\Z^n}$ for some 
$w\in W$. If the order of $w$ is $s$, then we have $k^s\mb{x}\equiv \mb{x} 
\pmod{\Z^n}$. It follows that $\mb{x}$ is a vector with rational components 
whose denominators are divisors of $k^s-1$.
\end{proof}

\begin{lemma}\label{red}
 Let $\alpha$ and $\beta$ be elements of $\Fix(P_\mf{g}^q)$. If 
$\alpha\equiv\beta\pmod{\mf{p}} $, then $\alpha=\beta$.
\end{lemma}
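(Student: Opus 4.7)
The plan is to prove that each $\alpha\in\Fix(P_\mf{g}^q)$ reduces to a simple zero of $F(\mb{z}):=P_\mf{g}^q(\mb{z})-\mb{z}$ modulo $\mf{p}$; then Hensel's lemma in the completed local ring $\mc{O}_{\K_\mf{p}}$ produces a unique lift of any simple residue, so the hypothesis $\alpha\equiv\beta\pmod{\mf{p}}$ will force $\alpha=\beta$ already in $\mc{O}_{\K_\mf{p}}$ and hence in $\mc{O}_\K$, since $\alpha,\beta\in\mc{O}_\K\subset\mc{O}_{\K_\mf{p}}$.

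To establish simplicity, I would compute the Jacobian of $F$ at $\alpha$ via the commutative diagram $\varPhi_\mf{g}\circ[q]=P_\mf{g}^q\circ\varPhi_\mf{g}$. Writing $\alpha=\varPhi_\mf{g}(\mb{x})$, the fixed-point condition lifts to $q\mb{x}\equiv T_w\mb{x}\pmod{L}$ for some $w\in W$. Differentiating the diagram identity at $\mb{x}$, and using both the $L$-periodicity of $\varPhi_\mf{g}$ and the $W$-invariance consequence $d\varPhi_\mf{g}(w\mb{x})T_w=d\varPhi_\mf{g}(\mb{x})$, at a regular $\mb{x}$ one obtains
\[
J_{P_\mf{g}^q}(\alpha)-I_n \ = \ d\varPhi_\mf{g}(\mb{x})\bigl(qT_w^{-1}-I_n\bigr)d\varPhi_\mf{g}(\mb{x})^{-1},
\]
so that $\det\bigl(J_{P_\mf{g}^q}(\alpha)-I_n\bigr)=\pm\det(qI_n-T_w)$. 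Since $p\mid q$, the matrix $qI_n-T_w$ reduces to $-T_w$ modulo $p$; and $T_w$ is an integral matrix of finite order, so $\det T_w=\pm1$ and the Jacobian determinant is $\pm1$ modulo $p$. In particular it is a $\mf{p}$-adic unit, which is exactly the simplicity needed to invoke Hensel's lemma as above.

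The main obstacle lies at $W$-singular $\mb{x}$, where $d\varPhi_\mf{g}(\mb{x})$ fails to be invertible and the chain-rule manipulations above are not legal. I would expect to handle such $\alpha$ either by refining the Jacobian calculation to absorb the ramification of $\varPhi_\mf{g}$ into a product of factors $\det(qI_n-T_w)$ ranging over the $w\in W$ that realize $q\mb{x}\equiv T_w\mb{x}\pmod{L}$---each factor is still $\pm1$ modulo $p$ by the same reduction---or by a scheme-theoretic route: flatness of $\Z[\mb{z}]/(F)$ over $\Z$, combined with Lemma~\ref{fix} and the nonvanishing of $\det(qI_n-T_w)$ over $\C$ (the eigenvalues of finite-order $T_w$ are roots of unity, never equal to $q\geq2$), forces the $\mf{p}$-fibre to consist of at most $q^n$ distinct geometric points, precluding any collision of characteristic-zero reductions.
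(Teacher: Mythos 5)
Your strategy is genuinely different from the paper's. The paper never touches Jacobians or Hensel's lemma: it writes $\alpha=\varPhi_\mf{g}(\mb a)$, $\beta=\varPhi_\mf{g}(\mb b)$ with $\mb a,\mb b$ having denominators dividing $q^s-1$, observes that reduction mod $\mf{p}$ is injective on $(q^s-1)$-th roots of unity (since $q^s-1$ is prime to $p$), and uses the structure of $\varPhi_\mf{g}$ as a composition of elementary symmetric functions, invertible linear maps and $z+1/z$ to conclude that $\bar\alpha=\bar\beta$ forces $w_1\mb a\equiv w_2\mb b\pmod{\Z^n}$, whence $\alpha=\beta$ by $W$-invariance. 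That argument lives entirely at the level of roots of unity and sidesteps every ramification issue.

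Your Hensel route has a genuine gap exactly where you flag it, and the flagged case is not exotic --- it always occurs. Take $\mb x=0$: then $\alpha=\varPhi_\mf{g}(0)=(|W|,\ldots,|W|)$ is a fixed point of $P_\mf{g}^q$ for every $q$, and $d\varphi_k(0)=2\pi i\sum_{w\in W}w(\omega_k)=0$ because the only $W$-invariant vector in the reflection representation is zero; so $d\varPhi_\mf{g}(0)$ vanishes identically and the conjugation formula for $J_{P_\mf{g}^q}(\alpha)-I_n$ is meaningless there. Neither proposed repair closes this. The ``product over the stabilizing $w$'' formula is the right heuristic (for $A_1$ at $x=0$ one indeed gets $F'(2)=q^2-1=(q-1)(q+1)$), but it is asserted rather than proved, and proving it amounts to a local intersection-multiplicity computation through the ramification of $\varPhi_\mf{g}$ --- which is the hard part of this whole approach. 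The flatness route does not suffice as stated: knowing $\Z[\mb z]/(F)$ is finite flat of rank $q^n$ with reduced generic fibre only bounds the length of the $\mf{p}$-fibre by $q^n$; a collision of two characteristic-zero fixed points would simply produce a non-reduced point of multiplicity at least $2$ in the special fibre, perfectly consistent with total length $q^n$. Ruling that out requires the special fibre to be \'etale, i.e.\ the Jacobian of $F$ to be a $\mf{p}$-adic unit at every residue point --- precisely the statement left unverified at the singular points. As written, the proposal therefore does not prove the lemma.
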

\begin{proof}
 There exist $\mb{a}, \mb{b}\in {\mc O}_{\mf{p}}^n/\Z^n$, where $\mc O_\mf{p}$
 is the localization at $\mf{p}$ of the ring of integers $\mc O$ of $\K$,  such 
that $\alpha=\varPhi_\mf{g}({\mb a})$ and $\beta=\varPhi_\mf{g}({\mb b})$. The 
components of $\mb{a}$ and $\mb{b}$ have denominators which are divisors of 
$q^s-1$ by the proof of Lemma~\ref{nf}. Note that $\zeta_{q^s-1}^m \equiv 
\zeta_{q^s-1}^{\tilde{m}} 
\pmod{\mf{p}}$ if and only if $m\equiv \tilde{m} \pmod{q^s-1}$. Moreover, the 
map $\varPhi_\mf{g}$ is a composition of elementary symmetric functions, 
invertible linear maps and maps of the form $z+1/z$ \cite{hoffwith}. If 
$\bar{\alpha}=\bar{\beta}$, then this means that $w_1\mb{a} \equiv w_2\mb{b} 
\pmod{\Z^n}$ for some $w_1,w_2\in W$. Thus $\alpha=\varPhi_\mf{g}({w_1\mb 
a})=\varPhi_\mf{g}({w_2\mb b})=\beta$.
\end{proof}

\begin{lemma}\label{frob}
 We have $\bar{P}_\mf{g}^q = \Frob_q$. 
\end{lemma}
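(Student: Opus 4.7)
The plan is to check the identity on a large enough subset of $\F_q^n$: namely the reductions mod $\mf{p}$ of $\Fix(P_\mf{g}^q)$. By Lemma~\ref{fix} there are $q^n$ such fixed points and by Lemma~\ref{red} their reductions are distinct, so if I can show these reductions lie in $\F_q^n$ and that $\bar{P}_\mf{g}^q$ agrees with $\Frob_q$ on them, I will have equality of two polynomial maps on a set of size $q^n = |\F_q^n|$, hence equality as functions on all of $\F_q^n$.

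Take $\alpha \in \Fix(P_\mf{g}^q)$ and write $\alpha = \varPhi_\mf{g}(\mb{x})$. By Lemma~\ref{nf} the coordinates of $\mb{x}$ are rationals with denominators dividing some $q^s - 1$, so each $\zeta_w := e^{2\pi i w(\omega_k)(\mb{x})}$ is a root of unity in $\mc{O}$ and $\alpha \in \mc{O}^n$, making the reduction mod $\mf{p}$ meaningful. The heart of the argument is the congruence
\[ \varphi_k(\mb{x})^q \equiv \varphi_k(q\mb{x}) \pmod{\mf{p}}. \]
Using linearity of the weight $w(\omega_k)$, this reads $(\sum_w \zeta_w)^q \equiv \sum_w \zeta_w^q \pmod{\mf{p}}$, which is immediate since $q$ is a power of $p$ and the $q$-th power map is a ring endomorphism of $\mc{O}/\mf{p}$. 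Combined with the defining relation $P_\mf{g}^q(\varPhi_\mf{g}(\mb{x})) = \varPhi_\mf{g}(q\mb{x})$ of Theorem~\ref{veselov}, this yields $\bar{P}_\mf{g}^q(\bar{\alpha}) = \Frob_q(\bar{\alpha})$.

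Since $\alpha$ is a fixed point, the left side also equals $\bar{\alpha}$, so $\Frob_q(\bar{\alpha}) = \bar{\alpha}$ and $\bar{\alpha} \in \F_q^n$. Applying this to every $\alpha \in \Fix(P_\mf{g}^q)$ produces $q^n$ distinct points of $\F_q^n$ on which $\bar{P}_\mf{g}^q$ and $\Frob_q$ coincide; since $|\F_q^n| = q^n$ these points exhaust the set, and the two polynomial maps are equal as functions $\F_q^n \to \F_q^n$.

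The only nontrivial step is the Frobenius congruence above, for which one must be honest about the fact that the exponentials $\zeta_w$ really do live in $\mc{O}$ so that reduction modulo $\mf{p}$ is legitimate; this is exactly what Lemma~\ref{nf} is set up to provide. Everything else is bookkeeping and a counting argument that simply packages Lemmas~\ref{fix} and~\ref{red}.
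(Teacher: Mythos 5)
Your proof is correct, and the computational heart of it---the congruence $\bigl(\sum_w \zeta_w\bigr)^q \equiv \sum_w \zeta_w^q \pmod{\mf{p}}$ combined with $\varphi_k(q\mb{x}) = \sum_w \zeta_w^q$ by linearity of the weights---is exactly the paper's key step. Where you diverge is in how you propagate this pointwise identity to all of $\F_q^n$. The paper asserts that the reduction $\bar{\varPhi}_\mf{g}:{\mc O}_{\mf{p}}^n/\Z^n \to \bar{\F}_p^n$ is \emph{surjective}, so the identity $\bar{P}_\mf{g}^q = \Frob_q$ holds on all of $\bar{\F}_p^n$ at once; you instead verify it only on the reductions of $\Fix(P_\mf{g}^q)$ and then count: these are $q^n$ distinct points (Lemmas~\ref{fix} and~\ref{red}) which you show lie in $\F_q^n$, hence exhaust it. Your route avoids the surjectivity claim (which the paper states without proof) at the price of importing the two earlier lemmas, and it has the pleasant side effect of establishing internally the fact---which the paper deduces from Lemma~\ref{frob} only afterwards---that the reduced fixed points land in $\F_q^n$; note that this deduction genuinely requires the identity at points of $(\mc{O}/\mf{p})^n$ not yet known to be in $\F_q^n$, which your argument supplies for reductions of fixed points even though your final statement is phrased only on $\F_q^n$. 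One small point to be honest about in both proofs: the individual roots of unity $\zeta_w$ need not lie in $\K$ itself, so the congruence should really be read modulo a prime above $\mf{p}$ in a sufficiently large cyclotomic extension and then intersected back down; this is harmless and the paper glosses over it in the same way.
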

\begin{proof}
Let us consider the map $\bar{\varPhi}_\mf{g}:{\mc O}_{\mf{p}}^n/\Z^n \rightarrow 
\bar{\F}_p^n$ given by $\mb{x}\mapsto\overline{\varPhi_\mf{g}(\mb{x})}$. 
This map is surjective. Letting $t_j = e^{2\pi ix_j}, j = 1,\ldots,n$ we 
see that each component $\varphi_k$ of $\varPhi_\mf{g}(\mb{x})$ is given by a 
sum of integer powers of $t_j$'s. It follows that $\varphi_k(q\mathbf{x})$ is 
obtained by raising each term in this sum to its $q$-th power. We have
\begin{align*}
 \bar{P}_\mf{g}^q\left(\overline{\varPhi_\mf{g}(\bf{x})}\right) & = \left( 
\overline{\varphi_1(q\mb{x})},\ldots,\overline{\varphi_n(q\mb{x})} \right) \\
 &= \left( \left(\overline{\varphi_1(\bf{x})}\right)^q,\ldots, 
\left(\overline{\varphi_n(\bf{x})}\right)^q  \right) \\
 &= \Frob_q\left(\overline{\varPhi_\mf{g}(\bf{x})}\right).
\end{align*}
This proves the claim.
\end{proof}

There are $q^n$ fixed points of $P_\mf{g}^q$ by Lemma~\ref{fix}. Each one of 
these elements reduce to a different element in $(\mathcal{O}/\mf{p})^n$ by 
Lemma~\ref{red}. Moreover, each reduced element belongs to $\F_q^n$ by 
Lemma~\ref{frob}. Thus, we have a one-to-one correspondence
\[ \F_q^n \longleftrightarrow \Fix(P_\mf{g}^q) \]
 obtained by reducing the elements in $\Fix(P_\mf{g}^q)$ modulo $\mf{p}$. Note 
that $\mathcal{O}/\mf{p}$ is always a nontrivial extension of $\F_q$. This 
correspondence is compatible under the actions of $\bar{P}_\mf{g}^q$ and 
$P_\mf{g}^q$, respectively.\\

Now, we are ready to prove our main result.

\begin{proof}
Let $\alpha=\varPhi_\mf{g}(\mb{x})$ be an element of $\Fix(P_\mf{g}^q)$. Then, 
we have
\[ P_\mf{g}^q(P_\mf{g}^k(\alpha))= \varPhi_\mf{g}(qk\mb{x}) 
=P_\mf{g}^k(P_\mf{g}^q(\alpha))= 
P_\mf{g}^k(\alpha). \]
Thus, the restricted map $P_\mf{g}^k: \Fix(P_\mf{g}^q) \rightarrow 
\Fix(P_\mf{g}^q)$ is well-defined. The components of $\mb{x}$ 
are rational numbers whose denominators are divisors of $q^s-1$ by the proof 
of Lemma~\ref{nf}. Actually, we can say more about these components. The set 
of fixed points of $P_\mf{g}^q$ is obtained by solving the equation 
$q\mb{x}=w\mb{x} \pmod{\Z^n}$ for each $w\in W$. It is clear that the rows 
$\mb{x}_i^w$ of the matrix $(qI_n-T_w)^{-1}$ generate the set 
$\Fix(P_\mf{g}^q)$. More 
precisely, we have
\[ \Fix(P_\mf{g}^q) = \left\{ \varPhi_\mf{g}\left(\sum_{i=1}^n 
m_i\mb{x}_i^w\right) \mathrel{:} m_i\in\Z, w\in W\right\}. \]

Suppose that $qI_n-T_w$ is invertible modulo $k$ for each $w\in W$. This means 
that the vectors $\mb{x}_i^w$ have rational components whose denominators are 
relatively prime to $k$. Let $d$ be the product of all possible denominators, 
when the components of $\mb{x}_i^w$ are expressed in their lowest terms. Then 
there exists $\ell$ such that $k\ell \equiv 1 \pmod{d} $. As a result 
$P_\mf{g}^k$ 
and $P_\mf{g}^\ell$, restricted to $\Fix(P_\mf{g}^q)$, are inverses of 
each other. Therefore $P_\mf{g}^k$ permutes the finite set $\Fix(P_\mf{g}^q)$.

For the converse, suppose that $P_\mf{g}^k$ permutes the finite set 
$\Fix(P_\mf{g}^q)$. This is possible if the multiplication by $k$ does not kill 
any denominators within the vectors $\mb{x}_i^w$. Therefore, the matrix 
$qI_n-T_w$ must be invertible modulo $k$ for each $w\in W$.
\end{proof}

Veselov believes that the family of maps $P_\mf{g}^k$ exhaust all integrable 
polynomial mappings $\C^n \to \C^n$ of degree $d > 1$ (\cite{veselov}, p.212). 
To the best of our knowledge, no counterexample has been found so far. Relying 
on this conjecture, one expects that the family $P_\mf{g}^k$ together with 
linear mappings exhaust all exceptional mappings in $n$ variables. \\

{\bf Acknowledgement}. It is a pleasure for the author to thank H. \"{O}nsiper for some
useful discussions. \\

{\small
\def\refname{References}
\newcommand{\etalchar}[1]{$^{#1}$}


\begin{thebibliography}{tt}

\bibitem[GAL]{Bourbaki} N. Bourbaki, \textit{Elements de Math\`ematique, Groupes 
et Algebres de Lie}, Hermann, Paris, 1972.


\bibitem[HW88]{hoffwith}
M. E. Hoffman and W. D. Withers; \textit{Generalized Chebyshev polynomials 
associated with affine Weyl groups.} Trans. Amer. Math. Soc. 308 (1988), 
91--104.


\bibitem[K\"{u}16]{kucuksakalli-bivariate} 
\"{O}. K\"{u}\c{c}\"{u}ksakall\i, \textit{Bivariate polynomial mappings 
associated with simple complex Lie algebras.} J. Number Theory 168 (2016), 
433--451.


\bibitem[LW72]{lidlwells}
R.~Lidl, C.~Wells, \textit{Chebyshev polynomials in several variables.}
J. Reine Angew. Math. 255 (1972), 104--111.


\bibitem[Uc09]{uchimura}
K.~Uchimura, \textit{Generalized Chebyshev maps of $\C^2$ and their 
perturbations.} Osaka J. Math. 46 (2009), no. 4, 995--1017. 


\bibitem[Ve87]{veselov}
A. P. Veselov, \textit{Integrable mappings and Lie algebras.} Soviet Math. 
Dokl. 35 (1987), 211--213.

\end{thebibliography}
\end{document}